\newcommand{\supp}[1]{\text{ supp }(#1)}
\begin{document}

\title{An $L^{2}-$stability estimate for periodic nonuniform sampling in higher dimensions}
\author{Christina Frederick}\email{}
\date{\today}
\address{}

\begin{abstract} We consider sampling strategies for a class of multivariate bandlimited functions $f$ that have a spectrum consisting of disjoint frequency bands. Taking advantage of the special spectral structure, we provide formulas relating $f$ to the samples $f(y), y\in X$, where $X$ is a periodic nonuniform sampling set. In this case, we show that the reconstruction can be viewed as an iterative process involving certain Vandermonde matrices, resulting in a link between the invertibility of these matrices to the existence of certain sampling sets that guarantee a unique recovery. Furthermore, estimates of inverse Vandermonde matrices are used to provide explicit $L^{2}$-stability estimates for the reconstruction of this class of functions. 

\end{abstract}

%
%


\maketitle

\section{Introduction}

Almost all scientific applications that involve processing large datasets or images rely heavily on the acquisition and storage of measurements of a complex physical process or system. Efficiency is gained when the samples contain just enough information to uniquely recover the signal, but no more. As advancements in technology demand more sophisticated methods to handle data, it becomes even more important to understand the fundamental structure of sampling processes. 

Mathematically, the problem can be formulated as the reconstruction of a function $f$, in a reliable manner, from knowledge of a collection of its function values, called samples, evaluated at points in a discrete sampling set $X=\{x_{j}\}$. Shannon's sampling theorem states that bandlimited functions $f$ are uniquely determined by its samples taken on the uniform sampling set $X=\{j \Delta\mid j\in \ZZ\}$, where the spacing $\Delta$ must be chosen to resolve the highest frequencies in the spectrum \cite{Shannon}. This theory also provides explicit reconstruction formulas that relate the samples $f(y), y\in X$ to the desired function $f$. 

Sampling of bandlimited functions using arbitrary sampling sets is less understood, especially in higher dimensions, and there is an enormous volume of literature on nonuniform sampling theory and its various generalizations and extensions \cite{Behmard2002,Benedetto2001, Jerri1977, Marvasti2001, Papoulis1977,Strohmer2006}.  It is important to also assess the sensitivity of the reconstruction with respect to to small changes in the sampling set. A set $X\subset \RR$  is a stable set of sampling for $\mathcal{B}(\Omega)$ if for some constant $C>0$,
\[\|f\|^{2}_{L_{2}(\RR)}\leq C \sum_{y\in X}|f(y)|^{2},\quad \text{for all } f\in \mathcal{B}(\Omega).\]
Explicit estimates of the stability constant for arbitrary sampling sets $X\subset \RR^{d}$ are given in \cite{Grochenig2001}, however the proof relies on an upper bound on the maximum distance between a sampling point $x_{j}$ and its nearest neighbor $x_{j'}\neq x_{j}$. This upper bound depends on the dimension, and the given estimates deteriorate in higher dimensions. According to \cite{Beurling1967, Landau}, the necessary density conditions on the sampling set are independent of dimension. Density conditions and explicit stability bounds are given in \cite{Adcock2016} for bunched sampling of multivariate functions and their derivatives.

In this paper, we determine the stability constants explicitly by examining properties of matrices that are produced from periodic sampling formulas and applying estimates of inverse Vandermonde matrices. Here, additional conditions are imposed on the spectral structure of multivariate bandlimited functions. These functions are well-studied in the literature on periodic sampling, however stability estimates are rarely given explicity. In \S\ref{sec:bps}, we state the main result and give lemmas needed for the main reconstruction result and estimates of inverse Vandermonde matrices. In \S \ref{sec:iterative-recon}, an iterative reconstruction algorithm is given. The proof of the main stability result is given in \S\ref{sec:stability-proof}.

\subsection*{Notation} In this paper, $L^{2}(\RR^{d})$ is the usual Hilbert space of measurable, square-integrable functions, (i.e. $\|f\|_{L^{2}(\RR^{d})}=(\int_{\RR^{d}} |f(x)|^{2}dx )^{1/2}<\infty$) and we denote the Fourier transform of $f\in L^{2}(\RR^{d})$  by $\hat{f}(\xi)=\int_{\RR^d} f(x)e^{-2\pi i \ip{x}{\xi}}dx$, where $\ip{x}{\xi} = \sum_{i=1}^{d}x_{i}\xi_{i}$ is the Euclidean inner product on $\RR^{d}$.   The maximum norm is $\|x\|_{\infty} = \max_{i=1, \hdots d} |x_{i}|$. For a bounded, measurable set $\Omega\subset\RR^{d}$, the space of multivariate bandlimited functions $\mathcal{B}(\Omega)$ is defined as $\mathcal{B}(\Omega)=\{f\in L^{2}(\RR^d) \mid \supp{\hat{f}}\subset \Omega\}.$

\section{Main result}\label{sec:bps}
Motivated by the one-dimensional case presented in \cite{Engquist2014}, the setting here involves multivariate functions that are bandlimited to a set composed of a finite number of nonoverlapping translations of $\Omega= (-1/2, 1/2)^{d}$. For a nonnegative integer $M$, the set $\mc{M}^{d}=\ZZ^{d}\cap[-M, M]^{d}$ is taken to be a bounded, discrete set corresponding to the disjoint frequency bands in the spectrum $\Omega_{M,  N}\subset \RR^{d}$, given by
\begin{align}
\Omega_{M,  N}=\{\xi+mN \mid \xi\in\Omega, m\in\mc{M}^{d}  \}, \qquad N\geq 1\labeleq{OmegaMN}.
\end{align}
The left plot in Figure \ref{fig:mbspectra} illustrates $\Omega_{M,N}$ in two dimensions.

The main theorem describes the reconstruction of functions $f\in \mc{B}(\Omega_{M,N})$ from samples on the periodic nonuniform sampling set
\begin{align}
X&= \{j\Delta+ k\delta\Delta\mid j \in \ZZ^{d}, k\in \mathcal{K}^{d}\}, 
\labeleq{X}
\end{align}
where $\mathcal{K}^{d}=\{ k\in \ZZ^{d}\mid 0\leq |k|\leq 2M\}$, and sufficient conditions on $\delta$ and $\Delta$ that guarantee a unique and stable reconstruction are given in the main result.

\begin{theorem}\label{thm:main-samp-nd}
A function $f\in\mc{B}(\Omega_{M,N})$ can be uniquely reconstructed from the periodic nonuniform samples 
$ f(y), y\in X$, where $X$ is given by \refeq{X} and $\frac{1}{N}\leq \Delta\leq1$ and $0<\delta\leq\frac{1}{(2M+1)N}$. In addition, the following stability estimate holds:
\begin{align}
A \|f\|_{L^{2}(\RR^{d})}^{2} \leq \Delta^{d}\sum_{y\in X}|f(y)|^{2}\leq B \|f\|_{L^{2}(\RR^{d})}^{2},\labeleq{thmstability}
\end{align}
where the constants $A$ and $B$ satisfy
 \begin{align}
{(2M+1)^{-d}} \(\prod\limits_{m=1}^{2M}{\sin\(m\pi \delta  \)}\)^{2d}\leq A\leq B,\qquad (2M+1)^{d}\leq B   \leq (2M+1)^{2d}.\labeleq{thmab}
\end{align}
When $N=1$ and $\delta=1/(2M+1)$, the uniform sampling set $X$ allows for reconstruction with stability constants
\begin{align}
A = B=(2M+1)^{d}.\labeleq{tightframe}
\end{align}

\end{theorem}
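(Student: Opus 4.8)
The plan is to pass to the frequency domain, exploit the disjointness of the bands to split $f$ into bandlimited pieces, and then recognize the sampling relations as a pointwise (in frequency) Vandermonde system whose conditioning controls both uniqueness and the constants $A,B$. First I would write $\hat f=\sum_{m\in\mc M^d}\hat g_m(\cdot-mN)$, where $\hat g_m:=\hat f(\cdot+mN)\,\mathbbm 1_{\Omega}$, so that each $g_m\in\bl\Omega$ and, because the translates $\Omega+mN$ are pairwise disjoint, Parseval gives the exact identity $\|f\|_{L^2(\RR^d)}^2=\sum_{m}\int_\Omega|\hat g_m(\xi)|^2\,d\xi$. For a fixed coset index $k\in\mc K^d$ the samples $c_k(j):=f(j\Delta+k\delta\Delta)$ are the values of the translate $x\mapsto f(x+k\delta\Delta)$ on the lattice $\Delta\ZZ^d$; applying Poisson summation to these samples produces the periodized quantity $G_k(\xi):=\sum_{j}c_k(j)e^{-2\pi i\ip{j\Delta}{\xi}}$. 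Using $\frac1N\le\Delta\le1$, each band has width one and does not alias with itself, so on $\Omega$ the aliasing formula collapses to $e^{-2\pi i\ip{\xi}{k\delta\Delta}}G_k(\xi)=\Delta^{-d}\sum_{m\in\mc M^d}z_m^{\,k}\,\hat g_m(\xi)$, where $z_m^{\,k}:=e^{2\pi i N\delta\Delta\ip{k}{m}}$.

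This is the crucial observation: for each frequency $\xi\in\Omega$ the $(2M+1)^d$ processed sample functions $\{G_k(\xi)\}_{k\in\mc K^d}$ are obtained from the $(2M+1)^d$ unknowns $\{\hat g_m(\xi)\}_{m\in\mc M^d}$ by applying a matrix $V=(z_m^{\,k})_{k,m}$ that is a Kronecker product of $d$ one-dimensional Vandermonde matrices with nodes $e^{2\pi i N\delta\Delta\,m}$, $m=-M,\dots,M$, on the unit circle. The hypotheses $0<\delta\le\frac1{(2M+1)N}$ and $\Delta\le1$ force $N\delta\Delta\le\frac1{2M+1}$, so the $2M+1$ nodes are distinct; hence $V$ is invertible, the system can be solved pointwise in $\xi$ for $\hat g_m$, and $f$ is recovered. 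Organizing this inversion band by band is exactly the iterative reconstruction of \S\ref{sec:iterative-recon}, which I would invoke for the explicit recovery; the invertibility established here yields uniqueness.

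For the stability estimate I would combine the two identities above. Parseval for the discrete-time transform converts $\Delta^d\sum_{y\in X}|f(y)|^2$ into an integral over $\Omega$ of $\sum_{k}|G_k(\xi)|^2$, while the matrix relation gives $\sum_{k}|G_k(\xi)|^2=\Delta^{-d}\,\|V\hat{\mathbf g}(\xi)\|^2$ with $\hat{\mathbf g}(\xi)=(\hat g_m(\xi))_m$. Bounding $\sigma_{\min}(V)^2\|\hat{\mathbf g}\|^2\le\|V\hat{\mathbf g}\|^2\le\sigma_{\max}(V)^2\|\hat{\mathbf g}\|^2$ and integrating, together with $\|f\|^2=\int_\Omega\|\hat{\mathbf g}\|^2$, yields \refeq{thmstability} with $A$ and $B$ proportional to the extreme singular values of $V$. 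Since $V$ is a Kronecker product, $\sigma_{\min}(V)=\sigma_{\min}(V_1)^d$ and $\sigma_{\max}(V)=\sigma_{\max}(V_1)^d$ for the one-dimensional factor $V_1$, which explains the $d$-fold and $2d$-fold powers in \refeq{thmab}. The bound $(2M+1)^d\le B\le(2M+1)^{2d}$ comes from the two-sided estimate $n\le\sigma_{\max}(V_1)^2\le n^2$ (with $n=2M+1$) obtained by computing the Frobenius norm of a unit-modulus Vandermonde, and the lower bound on $A$ comes from the inverse-Vandermonde estimate announced above, whose determinantal form produces the product $\prod_{m=1}^{2M}\sin(m\pi\delta)$; here $\Delta\ge\frac1N$ guarantees $N\delta\Delta\ge\delta$, so replacing the node-spacing parameter by $\delta$ gives a valid worst-case bound. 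Finally, when $N=1$ and $\delta=\frac1{2M+1}$ the nodes become the $(2M+1)$-th roots of unity, so $V_1$ is a scaled DFT matrix with $V_1^{*}V_1=(2M+1)I$; then $\sigma_{\min}=\sigma_{\max}$ and both constants collapse to \refeq{tightframe}.

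The main obstacle I anticipate is the aliasing bookkeeping in the intermediate range $\frac1N<\Delta<1$: there the period $\Delta^{-d}$ of $G_k$ strictly exceeds $|\Omega|$, several grid translates can meet $\supp{\hat f}$, and one must check that the folding still produces precisely the stated Vandermonde matrix and that the energy of $G_k$ over a full period is accounted for without double counting. Verifying this clean collapse—rather than the singular-value algebra, which is routine once $V$ is identified—is where the real care is needed.
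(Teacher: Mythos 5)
Your overall route is the same as the paper's, point for point: your decomposition $\hat f=\sum_m\hat g_m(\cdot-mN)$ is Lemma \ref{lemma:fl2}; your per-frequency linear system is \refeq{fk}; your Kronecker-product inversion is exactly the paper's dimension-by-dimension solve in Theorem \ref{thm:reconstruction} (inverting one tensor factor at a time); and your constants $A=\sigma_{\min}^{2d}$, $B=\sigma_{\max}^{2d}$ of the one-dimensional factor, bounded via Gautschi's inverse-Vandermonde estimate and a norm bound $2M+1\le\|V_1\|_2^2\le(2M+1)^2$, reproduce \S\ref{sec:stability-proof} (the paper cites Pan for the latter bound; your Frobenius-norm derivation is a fine elementary substitute, and the roots-of-unity case is handled identically).

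The one genuine gap is exactly the point you flagged at the end, and it does not resolve the way you hoped: the folding does \emph{not} produce the matrix with entries $e^{2\pi iN\delta\Delta\ip{k}{m}}$. Unless $N\Delta\in\ZZ$, the band center $mN$ is not a point of the dual lattice $\frac{1}{\Delta}\ZZ^{d}$, so the alias of band $m$ carries the phase $e^{2\pi i\ip{z_m(\xi)}{k\delta\Delta}}=e^{2\pi i\delta\ip{L_m(\xi)}{k}}$, where $z_m(\xi)\in\frac{1}{\Delta}\ZZ^{d}$ is defined by \refeq{mN} and $L_m(\xi)=z_m(\xi)\Delta\in\ZZ^{d}$ is a componentwise rounding of $mN\Delta$ that is only piecewise constant in $\xi$; moreover the unknowns are the shifted evaluations $\hat c_m(\xi-mN+z_m(\xi))$, not $\hat g_m(\xi)$. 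This is the content of Lemma \ref{lem:SXCmk} and the reason the paper introduces $z_m(\xi)$ at all. Your plan survives this correction, but each step needs the paper's version of the argument: (i) $z_m(\xi)$ separates coordinatewise, so the Kronecker structure persists with $\xi$-dependent integer nodes $e^{2\pi i\delta L_{m_i}(\xi_i)}$ in each factor; (ii) distinctness of the nodes is no longer your equal-spacing observation but requires the monotonicity argument $L_{m+1}(\xi)-L_m(\xi)\ge L_1(\xi)\ge 1$ together with the total angular-spread bound, as in the proof of Lemma \ref{lem:vinv}; and (iii) the stability constant survives because $m\le L_m(\xi)\le mN$ makes $\sin(L_m(\xi)\pi\delta)\ge\sin(m\pi\delta)$ uniformly in $\xi$, which is precisely estimate \refeq{maxest1}, and is where the hypothesis $\Delta\ge\frac1N$ actually enters. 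Once this bookkeeping is installed, your argument coincides with the paper's proof.
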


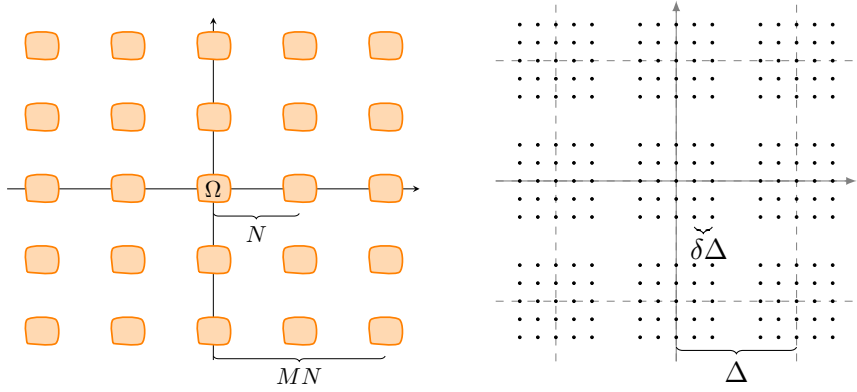
\begin{figure}
\captionsetup{width=\textwidth}
 \caption{On the left is the multiband spectrum given by \refeq{OmegaMN}, and on the right is a  periodic nonuniform sampling set \refeq{X}. } \label{fig:mbspectra}
\begin{center}
\begin{tikzpicture}[>=stealth,scale=.8]
    \begin{axis}[
        axis x line=middle,
        axis y line=middle,
        xmin=-6,     
        xmax= 6,    
        ymin=-6,     
        ymax= 6,   
        ticks=none,
        enlargelimits=true,
        after end axis/.code={
            \draw [decorate,decoration={brace,mirror,raise=6pt}] (axis cs:0,-.5) -- (axis cs:3,-.5) node [midway,below=8pt, black] {$N$};
                        \draw [decorate,decoration={brace,mirror,raise=6pt}] (axis cs:0,-6.5) -- (axis cs:6,-6.5) node [midway,below=8pt, black] {$MN$};
      \node[black] at (axis cs:0,0) {$\Omega$};
        }]

\foreach \x in {-6, -3,0,3,6}{
\foreach \y in {-6, -3,0,3,6}{
        \addplot[mark=none, orange, smooth, thick, fill=orange!30] coordinates {(-.5+\x,-.5+\y) (-.5+\x, .5+\y) (.5+\x,0.5+\y)  (.5+\x, -.5+\y) (-.5+\x,-.5+\y)};
        }
         }

    \end{axis} 
\end{tikzpicture}\hspace{1cm}\begin{tikzpicture}[scale=.8]
    \coordinate (Origin)   at (0,0);
    \coordinate (XAxisMin) at (-3,0);
    \coordinate (XAxisMax) at (3,0);
    \coordinate (YAxisMin) at (0,-3);
    \coordinate (YAxisMax) at (0,3);
    \draw [thin, gray,-latex] (XAxisMin) -- (XAxisMax);
    \draw [thin, gray,-latex] (YAxisMin) -- (YAxisMax);

    \draw[style=help lines,dashed] (-3,-3) grid[step=2cm] (3,3);

    \foreach \xj in { -1, 0, 1} {
     \foreach \yj in {-1, 0, 1}{
     \foreach \x in {-2,...,2}{
      \foreach \y in {-2,...,2}{
        \node[draw,circle,inner sep=.1pt,fill] at (.3*\x+2*\xj,.3*\y+2*\yj) {};
            
      }
    }
   }
    }
 \draw [decorate,decoration={brace,mirror,raise=6pt}] (.3,-.5) -- (.6,-.5) node [anchor=north, black] at (.55, -.8) {$\delta\Delta$};
                        \draw [decorate,decoration={brace,mirror,raise=6pt}] (0,-2.5) -- (2,-2.5) node [midway,below=8pt, black] {$\Delta$};

  \end{tikzpicture}

\end{center}
\end{figure}

As a remark, pointwise evaluation is well-defined and  $\refeq{thmstability}$
is also a statement about the boundedness of the sampling operator and its invertibility. Figure \ref{fig:mbspectra} provides an illustration of a periodic nonuniform sampling set of the form \refeq{X} in two dimensions. Landau proved a lower bound on the sampling rate needed for stable sampling \cite{Landau1967}. Applied to periodic nonuniform sampling, this lower bound is $|\Omega_{MN}| = (2M+1)^{d}$ samples in each interval $z+[0,1)^{d}, z\in \ZZ^{d}$. The sampling sets in Theorem \ref{thm:main-samp-nd} achieve this rate. It should be noted that the lower bound on $\Delta$ is not necessary for sampling and reconstruction, however the added restriction removes the possibility of oversampling.

\subsection{Subsampling formula} \label{sec:mb-samp}

We will build a sampling operator as in \cite{Behmard2002, Behmard2006} using the continuous function $\varphi_{\Delta}$ defined by
\begin{align}
\varphi_{\Delta}(z) &= \Delta^{d}\int_{\Omega_{\Delta}} e^{2\pi i \ip{z}{\xi} }d\xi, \quad \Omega_{\Delta}=\left(-\frac{1}{2\Delta}, \frac{1}{2\Delta}\right)^{d}.\end{align}
Since $\widehat{\varphi_{\Delta}}(\xi) = \Delta^{d}$ if $\xi\in\Omega_{\Delta}$ and $\widehat{\varphi_{\Delta}}=0$ otherwise, it follows that $\varphi_{\Delta}\in\mc{B}(\Omega_{\Delta})$ and $\|{\varphi_{\Delta}}\|_{L^{2}(\RR^{d})} = \sqrt{\Delta^{d}}$. 

Let $X_{0} =\Delta\ZZ^{d}$ and $X_{k} = X_{0}+k\delta\Delta$ for $k\in \mathcal{K}^{d}$. The sampling operator $S_{X_{k}}$ is formally defined for functions $g\in L^{2}(\RR^{d})$ by the formula
\begin{align}
S_{X_{k}}g(x)&= \sum_{y\in X_{k}} g(y) \varphi_{\Delta}(x-y)\labeleq{SX}.\end{align}

A higher dimensional extension of the classical sampling theorem \cite{Behmard2006, Behmard2002} states that if $\Omega\subseteq\Omega_{\Delta}$, then $S_{X_{k}}g$ is well-defined for $g$ in $\mc{B}(\Omega)$ and $S_{X_{k}}g = g$. Furthermore, the following stability estimate holds,
\begin{align}
\|S_{X_{k}}g\|_{L^{2}(\RR^{d})}^{2}&=\Delta^{d} \sum\limits_{y\in X_{k} }|g(y) |^{2}.\labeleq{Shannonstability}
\end{align}

Our main result involves subsampling by applying the sampling operator to functions $f\in\mathcal{B}(\Omega_{M,N})$. To do this, we first represent $f$ in terms of $\Omega-$bandlimited functions.

\begin{lemma}\label{lemma:fl2} For each $f\in\bl{\Omega_{M, N}}$ there exist functions $c_{m} \in \bl{\Omega}$ so that  \begin{align}
f(x) = \sum_{m\in\mc{M}^{d}}c_m\(x\)e^{2\pi i \ip{mN}{x}},\labeleq{twoscale}\end{align} and 
\begin{align*}
\|f\|_{L^{2}({\RR}^{d})}^{2} = \sum_{m\in \mc{M}^{d}}\|c_{m}\|_{L^{2}({\RR}^{d})}^{2}.
\end{align*}
\end{lemma}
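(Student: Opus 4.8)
The plan is to construct the functions $c_m$ directly on the Fourier side, exploiting that the spectrum is, by \refeq{OmegaMN}, a disjoint union of the translated cubes $\Omega+mN$ over $m\in\mc{M}^{d}$. Disjointness (up to a null set) is the structural fact that makes everything work: since $\Omega=(-1/2,1/2)^{d}$ and $N\geq 1$, the difference set $\Omega-\Omega=(-1,1)^{d}$ contains no nonzero vector of the form $(m-m')N$ with $m,m'\in\ZZ^{d}$, so the bands $\Omega+mN$ are pairwise disjoint.

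First I would define, for each $m\in\mc{M}^{d}$, the demodulated band
\[\hat{c}_m(\xi) = \hat{f}(\xi+mN)\,\chi_{\Omega}(\xi),\]
where $\chi_{\Omega}$ is the indicator of $\Omega$. Since $\hat{f}$ restricted to the band $\Omega+mN$ is captured exactly by this expression and $\hat{c}_m$ is supported in $\Omega$, the inverse Fourier transform $c_m$ lies in $\bl{\Omega}$.

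Next I would recover the representation \refeq{twoscale} by Fourier inversion together with the change of variables $\eta=\xi+mN$. Writing $c_m(x)=\int_{\Omega}\hat{f}(\xi+mN)e^{2\pi i\ip{x}{\xi}}\,d\xi$, multiplication by $e^{2\pi i\ip{mN}{x}}$ absorbs the shift into the exponential, giving $c_m(x)e^{2\pi i\ip{mN}{x}}=\int_{\Omega+mN}\hat{f}(\eta)e^{2\pi i\ip{x}{\eta}}\,d\eta$. Summing over the \emph{finite} index set $\mc{M}^{d}$ and using disjointness of the bands to reassemble $\Omega_{M,N}$, the right-hand side becomes $\int_{\Omega_{M,N}}\hat{f}(\eta)e^{2\pi i\ip{x}{\eta}}\,d\eta = f(x)$. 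Because $\mc{M}^{d}$ is finite, no question of convergence or interchange of limits arises.

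Finally, the norm identity is an application of Plancherel's theorem: I would write $\|f\|_{L^{2}(\RR^{d})}^{2}=\int_{\Omega_{M,N}}|\hat{f}|^{2}$, split the integral over the disjoint bands, and in each piece substitute $\eta=\xi+mN$ to identify $\int_{\Omega+mN}|\hat{f}(\eta)|^{2}\,d\eta$ with $\int_{\Omega}|\hat{c}_m(\xi)|^{2}\,d\xi=\|c_m\|_{L^{2}(\RR^{d})}^{2}$. The main (and essentially only) obstacle is confirming the pairwise disjointness of the translates and dismissing the measure-zero overlap on the cube boundaries; since $\Omega$ is open and $N\geq 1$ this is immediate, and the rest is routine.
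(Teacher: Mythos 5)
Your proposal is correct and follows essentially the same route as the paper: both define $\widehat{c_m}(\xi)=\hat{f}(\xi+mN)\mathbbm{1}_{\Omega}(\xi)$, obtain \refeq{twoscale} by Fourier inversion over the disjoint bands, and get the norm identity from Plancherel by splitting the integral over $\Omega_{M,N}$ and translating each band back to $\Omega$. Your explicit verification of pairwise disjointness of the translates $\Omega+mN$ is a detail the paper leaves implicit, but it changes nothing substantive.
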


\begin{proof} For each $m\in \mathcal{M}^{d}$, define $c_{m}$ by 
\begin{align*}
c_{m}(x)=\int_{\RR^{d}}\left[\hat{f}(\xi)\mathbbm{1}_{\Omega}(\xi-mN)  \right]e^{2\pi i \ip{\xi}{x}}d\xi
\end{align*}
where $\mathbbm{1}_{\Omega}(\xi)=1$ for  $\xi\in {\Omega}$ and $0$ otherwise. Then, $\widehat{c_{m}}(\xi) = \hat{f}(\xi+mN)\mathbbm{1}_{\Omega}(\xi)$  and $f$ has the representation \refeq{twoscale}. Taking the Fourier transform,
\begin{align*}
\|f\|_{L^{2}(\RR^{d})}^{2}=\int_{\RR^{d}}\left\vert\hat{f}(\xi)\right\vert^{2} d \xi &=\sum_{m\in \mc{M}^{d}} \int_{\Omega} \left\vert\hat{f}(\xi + mN)\right\vert^{2} d \xi
=\sum_{m\in \mc{M}^{d}} \int_{\RR^{d}} \left\vert\widehat{c_{m}}(\xi )\right\vert^{2} d \xi.
\end{align*}
\end{proof}

To account for the case where integer translates of $\frac{1}{\Delta}$ are not necessarily integer multiples of $N$, for each $\xi\in \Omega_{\Delta}$, we let $z_{m}(\xi)$ be the unique vector in $\Omega_{\Delta}^{-1} = \frac{1}{\Delta}\ZZ^{d}$ for which 
\begin{align}
\xi + z_{m}(\xi) \in \(\Omega+mN\)\labeleq{mN}.
\end{align}
Set $L_{m}(\xi)\in\ZZ^{d} = z_{m}(\xi)\Delta$ and $\alpha_{m}(\xi) = mN-z_{m}(\xi)\in\Omega_{\Delta}$.
Since $|\Omega|<|\Omega_{\Delta}|$, it follows that the set for each $m\in\mc{M}^{d}$, there are at most $2^{d}$ vectors in the set $\mathcal{Z}_{m}\subset \Omega_{\Delta}^{-1}$ so that $\(\Omega+mN\)\subseteq \cup_{z\in \mathcal{Z}_{m}}\(\Omega_{\Delta}+z\)$.
\begin{lemma}\label{lem:SXCmk} Let $c_{m}\in\mc{B}(\Omega)$ and define $\tilde{c}_{m}(x):={c}_{m}e^{2\pi i \ip{mN}{x}}$. Then, 
\begin{align}
\widehat{SX_{k}\tilde{c}_{m}}(\xi)=
{\hat{c}_{m}}(\xi - mN+z_{m}(\xi)) e^{ 2\pi i \ip{z_{m}(\xi)\Delta}{k\delta}} \mathbbm{1}_{\Omega_{\Delta}}(\xi)\labeleq{cmk}.
\end{align}
Furthermore, $SX_{k}\tilde{c}_{m}(y') = \tilde{c}_{m}(y')$ for all $y'\in X_{k}$
\end{lemma}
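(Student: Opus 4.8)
The plan is to compute $\widehat{S_{X_k}\tilde c_m}$ directly in the frequency domain and then use the support of $\widehat{c_m}$ to collapse the resulting aliasing sum to a single term. Starting from the definition \refeq{SX}, I would Fourier transform term by term, using the translation rule $\widehat{\varphi_\Delta(\cdot - y)}(\xi) = e^{-2\pi i\ip{y}{\xi}}\widehat{\varphi_\Delta}(\xi)$ together with $\widehat{\varphi_\Delta} = \Delta^{d}\mathbbm 1_{\Omega_\Delta}$, which gives $\widehat{S_{X_k}\tilde c_m}(\xi) = \Delta^{d}\mathbbm 1_{\Omega_\Delta}(\xi)\sum_{j\in\ZZ^d}\tilde c_m(j\Delta + k\delta\Delta)\,e^{-2\pi i\ip{j\Delta + k\delta\Delta}{\xi}}$. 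The key analytic step is to rewrite the sampled series by Poisson summation: applied to the $\Delta\ZZ^d$-periodization of $x\mapsto\tilde c_m(x + k\delta\Delta)e^{-2\pi i\ip{x}{\xi}}$ it converts the sum of samples into the aliasing sum $\Delta^{-d}e^{2\pi i\ip{k\delta\Delta}{\xi}}\sum_{z\in\Omega_\Delta^{-1}} e^{2\pi i\ip{z\Delta}{k\delta}}\,\widehat{\tilde c_m}(\xi + z)$. After the $\Delta^{\pm d}$ factors cancel and the phase $e^{2\pi i\ip{k\delta\Delta}{\xi}}$ cancels the one carried by the translated kernel, this leaves $\widehat{S_{X_k}\tilde c_m}(\xi) = \mathbbm 1_{\Omega_\Delta}(\xi)\sum_{z\in\Omega_\Delta^{-1}} e^{2\pi i\ip{z\Delta}{k\delta}}\,\widehat{\tilde c_m}(\xi + z)$.

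Next I would substitute the modulation identity $\widehat{\tilde c_m}(\eta) = \widehat{c_m}(\eta - mN)$, so the $z$-term is $\widehat{c_m}(\xi + z - mN)$. Since $\widehat{c_m}$ is supported in the open unit cube $\Omega$, this term survives only when $\xi + z\in\Omega + mN$. The crux is to show that for (a.e.) fixed $\xi\in\Omega_\Delta$ there is \emph{at most one} such $z\in\Omega_\Delta^{-1} = \frac1\Delta\ZZ^d$: if $\xi + z$ and $\xi + z'$ both lay in $\Omega + mN$ then $\|z - z'\|_\infty < 1$, because any two points of $\Omega + mN$ differ by strictly less than $1$ in each coordinate, whereas distinct points of $\frac1\Delta\ZZ^d$ satisfy $\|z - z'\|_\infty \geq \frac1\Delta \geq 1$ precisely because $\Delta\leq 1$; the two bounds are incompatible. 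This is exactly where the upper bound on $\Delta$ enters, ruling out aliasing overlap. The surviving index is the vector $z_m(\xi)$ of \refeq{mN}, so the aliasing sum collapses to the single term and yields \refeq{cmk}, with the convention that $\widehat{c_m}(\xi - mN + z_m(\xi)) = 0$ for those $\xi$ whose coset misses $\Omega + mN$.

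For the reproducing identity $S_{X_k}\tilde c_m(y') = \tilde c_m(y')$ on $y'\in X_k$, I would first record the interpolation property of the kernel, $\varphi_\Delta(j\Delta) = \Delta^{d}\int_{\Omega_\Delta} e^{2\pi i\ip{j\Delta}{\xi}}\,d\xi = \prod_{i=1}^{d}\operatorname{sinc}(j_i) = \delta_{j,0}$ for $j\in\ZZ^d$, since $\sin(\pi j_i) = 0$ for every nonzero integer $j_i$ and the $j=0$ integral gives $\Delta^{d}|\Omega_\Delta| = 1$. Writing $y' = j'\Delta + k\delta\Delta$, every point of $X_k$ differs from $y'$ by an element of $\Delta\ZZ^d$, so the common offset $k\delta\Delta$ cancels and $\varphi_\Delta(y' - y) = \varphi_\Delta((j' - j)\Delta) = \delta_{j,j'}$; substituting into \refeq{SX} leaves only the $j = j'$ term, giving $S_{X_k}\tilde c_m(y') = \tilde c_m(y')$.

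I expect the main obstacle to be the middle step: justifying the Poisson summation formula (convergence of the sampled series, which holds because $\tilde c_m\in\bl{\Omega + mN}$ is bandlimited and square-integrable, so its samples are square-summable and the periodization converges in $L^{2}(\Omega_\Delta)$) and, above all, establishing the single-term collapse of the aliasing sum, whose validity rests entirely on the spacing inequality $\frac1\Delta\geq 1$ forced by the hypothesis $\Delta\leq 1$.
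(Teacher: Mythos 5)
Your proposal is correct and follows essentially the same route as the paper: Poisson summation over the lattice $\Delta\ZZ^{d}$ produces an aliasing sum over the dual lattice $\Omega_{\Delta}^{-1}$, which collapses to the single term indexed by $z_{m}(\xi)$ because of the support condition on $\widehat{c_{m}}$, and the reproducing identity on $X_{k}$ follows from the lattice interpolation property $\varphi_{\Delta}(j\Delta)=\delta_{j,0}$. If anything, you are more explicit than the paper at the two points it leaves implicit, namely the argument that at most one $z\in\frac{1}{\Delta}\ZZ^{d}$ can contribute (which is exactly where the hypothesis $\Delta\leq 1$ enters) and the sinc-type interpolation property used in the final step.
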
 

\begin{proof} Applying the formula \refeq{Shannonstability} to $c_{m}\in \mathcal{B}(\Omega)$, it follows that $\sum_{y\in X_{k}}|c_{m}(y)e^{2\pi i \ip{mN}{y}}|^{2}=\sum_{y\in X_{k}}|c_{m}(y)|^{2}<\infty$. By Cauchy-Schwarz, the function defined by $\refeq{cmk}$ is well-defined and square-integrable. 
Then,
\begin{align}
\tilde{c}(x)&= \sum_{y\in X_{k}} c_{m}(y)e^{2\pi i \ip{mN}{y}} \varphi_{\Delta}(x-y) \nonumber\\
&= \sum_{n\in \ZZ^{d}} c_{m}((n+k\delta)\Delta)e^{2\pi i \ip{mN}{(n+k\delta)\Delta}} \varphi_{\Delta}(x-(n+k\delta)\Delta) \nonumber\\
&= \Delta^{d}\int_{\Omega_{\Delta}} \sum_{n\in \ZZ^{d}} c_{m}((n+k\delta)\Delta)e^{2\pi i \ip{mN}{(n+k\delta)\Delta}}e^{2\pi i\ip{x-(n+k\delta)\Delta}{\xi}}d\xi \nonumber\\
&= \int_{\Omega_{\Delta}} \sum_{l\in \ZZ^{d}} e^{2\pi i \ip{ k\delta}{l}}\( \int_{\RR^{d}}  c_{m}(y)e^{-2\pi i\ip{y}{\xi-mN+l/\Delta}}dy \)e^{2\pi i\ip{x}{\xi}} d\xi  \labeleq{Poisson}\\
&= \int_{\Omega_{\Delta}} \sum_{z_{m}\in\Omega_{\Delta}^{-1}} e^{2\pi i \ip{ k\delta}{z_{m}(\xi)\Delta}}\( \int_{\RR^{d}}  c_{m}(y)e^{-2\pi i\ip{y}{\xi-mN+z_{m}(\xi)}}dy \)e^{2\pi i\ip{x}{\xi}} d\xi 
\end{align}
The higher dimensional Poisson summation formula with results in \refeq{Poisson}. Then, taking the Fourier Transform,
\begin{align}
\widehat{SX_{k}\tilde{c}_{m}}(\xi)
&=  \widehat{c_{m}}\(\xi-mN+z_{m}(\xi)\) e^{2\pi i \ip{z_{m}(\xi)\Delta}{ k\delta}} \mathbbm{1}_{\Omega_{\Delta}}(\xi). \nonumber
\end{align}

Then, for $y'\in X_{k}$
\[SX_{k}\tilde{c}_{m}(y')=\sum_{y\in X_{k}}\tilde{c}(y) \varphi_{\Delta}(y-y') = \tilde{c}_{m}(y').\]
\end{proof}

The next lemma describes the reconstruction of functions $f\in\mc{B}(\Omega_{M,  N})$ and is a generalization of the one-dimensional result in \cite{Engquist2014}. 
\begin{lemma} Let $f$ be a function in the space $\mc{B}(\Omega_{M,  N})$ and let $X_{k}$ be a sampling set of the form \refeq{X}. The function $S_{X_{k}}f(x)= \sum_{y\in X_{k}} f(y) \varphi_{\Delta}(x-y)$, is in $\bl{\Omega_{\Delta}}$, and,
\begin{align}
\|SX_{k}f\|_{L^{2}(\RR^{d})}^{2}= \Delta^{d} \sum\limits_{y\in X_{k}}|f(y)|^{2}.\labeleq{SXkfstability}
\end{align}

\label{lemma:SX}
\end{lemma}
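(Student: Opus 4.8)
The plan is to bootstrap the classical Shannon stability \refeq{Shannonstability} by recognizing that, although $f$ itself need not lie in $\bl{\Omega_{\Delta}}$, its sampled reconstruction $S_{X_{k}}f$ does, and moreover agrees with $f$ on $X_{k}$. First I would invoke Lemma \ref{lemma:fl2} to decompose $f$ as $f = \sum_{m\in\mc{M}^{d}}\tilde{c}_{m}$, where $\tilde{c}_{m}(x) = c_{m}(x)e^{2\pi i\ip{mN}{x}}$ with $c_{m}\in\bl{\Omega}$. Because $S_{X_{k}}$ depends linearly on the sample values and $\mc{M}^{d}$ is a finite index set, the sum and the sampling sum may be interchanged to give $S_{X_{k}}f = \sum_{m\in\mc{M}^{d}}S_{X_{k}}\tilde{c}_{m}$.

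Next I would establish membership in $\bl{\Omega_{\Delta}}$. By the Fourier-transform formula \refeq{cmk} of Lemma \ref{lem:SXCmk}, each $\widehat{S_{X_{k}}\tilde{c}_{m}}$ carries the factor $\mathbbm{1}_{\Omega_{\Delta}}(\xi)$ and is therefore supported in $\Omega_{\Delta}$; square-integrability of each term was already verified in that lemma via Cauchy--Schwarz. Since a finite sum of functions supported in $\Omega_{\Delta}$ is again supported there, $\widehat{S_{X_{k}}f}$ is supported in $\Omega_{\Delta}$ and $S_{X_{k}}f\in\bl{\Omega_{\Delta}}$.

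The energy identity then follows from a self-consistency argument. The interpolation conclusion of Lemma \ref{lem:SXCmk}, namely $S_{X_{k}}\tilde{c}_{m}(y')=\tilde{c}_{m}(y')$ for $y'\in X_{k}$, summed over $m$, yields $S_{X_{k}}f(y')=f(y')$ for all $y'\in X_{k}$. Setting $g:=S_{X_{k}}f\in\bl{\Omega_{\Delta}}$ and noting that $\Omega_{\Delta}\subseteq\Omega_{\Delta}$ trivially, the reproducing part of the higher-dimensional sampling theorem gives $S_{X_{k}}g=g$, while the stability estimate \refeq{Shannonstability} applied to $g$ gives $\|g\|_{L^{2}(\RR^{d})}^{2}=\Delta^{d}\sum_{y\in X_{k}}|g(y)|^{2}$. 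Since $g(y)=f(y)$ on $X_{k}$, this reads $\|S_{X_{k}}f\|_{L^{2}(\RR^{d})}^{2}=\Delta^{d}\sum_{y\in X_{k}}|f(y)|^{2}$, which is \refeq{SXkfstability}.

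I expect the routine calculations to be negligible here, since the heavy lifting was done in Lemmas \ref{lemma:fl2} and \ref{lem:SXCmk}; the genuine conceptual point is the self-reference, observing that $f\notin\bl{\Omega_{\Delta}}$ in general (the translates by $mN$ push the spectrum outside $\Omega_{\Delta}$, and the sampling operator aliases it back), yet $S_{X_{k}}f$ does lie in $\bl{\Omega_{\Delta}}$ and interpolates $f$ on $X_{k}$, so that Shannon stability can be invoked for $g=S_{X_{k}}f$ rather than for $f$. The only additional hypothesis to check is $\Omega\subseteq\Omega_{\Delta}$, which holds because $\Delta\leq 1$ forces $\tfrac{1}{2\Delta}\geq\tfrac12$, ensuring the sampling theorem applies on the critical lattice $\Delta\ZZ^{d}$ and its shifts $X_{k}$.
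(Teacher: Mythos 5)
Your proposal is correct and follows essentially the same route as the paper's own proof: decompose $f$ via Lemma \ref{lemma:fl2}, use linearity together with Lemma \ref{lem:SXCmk} to obtain both the support of $\widehat{S_{X_{k}}f}$ in $\Omega_{\Delta}$ and the interpolation property $S_{X_{k}}f(y')=f(y')$ on $X_{k}$, then apply the Shannon stability identity \refeq{Shannonstability} to $S_{X_{k}}f\in\bl{\Omega_{\Delta}}$ itself. Your explicit remarks on the self-referential step and on checking $\Omega\subseteq\Omega_{\Delta}$ simply make precise what the paper leaves implicit.
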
 

\begin{proof} 
	
Applying the linear sampling operator to the functions $\tilde{c}_{m}$ defined in Lemma \ref{lem:SXCmk}, it follows that
\begin{align}
\widehat{SX_{k}f}(\xi) = \sum_{m\in\mc{M}^{d}} \widehat{c_{m}}(\xi-mN+z_{m}(\xi)) e^{2\pi i \ip{z_{m}(\xi)\Delta}{ k\delta}} \mathbbm{1}_{\Omega_{\Delta}}(\xi)  \labeleq{fk}.
\end{align}
Therefore, $SX_{k}f\in \mathcal{B}(\Omega_{\Delta})$ and for $y'\in X_{k}$,  $SX_{k}f(y') = \sum_{m\in\mc{M}^{d}}SX_{k}\tilde{c}_{m}(y') = \sum_{m\in\mc{M}^{d}}\tilde{c}_{m}(y') = f(y')$. Finally, by \refeq{Shannonstability},

\begin{align}
\|SX_{k}f\|^{2}_{L^{2}(\RR^{d})} &=\Delta^{d}\sum_{y\in X_{k}}|S{X}_{k}f(y)|^{2}=\Delta^{d}\sum_{y\in X_{k}}|{f}(y)|^{2}.
\end{align}

\end{proof}

\subsection{Vandermonde matrix estimates for $d=1$}\label{sec:Vandermonde}
In \cite{Engquist2014}, the following result is given for the system of equations \refeq{fk} in one spatial dimension. We present the proof, reformulated in the present context.

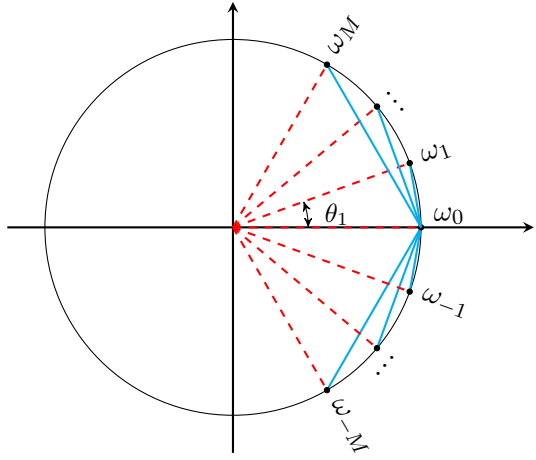
\begin{figure}
\captionsetup{width=\textwidth}
\caption{Distribution of the complex numbers $\omega_{-M}, \hdots \omega_{M}$ on the unit circle produced by $\omega_{m} = e^{i \theta_{m}}$. The blue chords represent the distances in the products in \refeq{prodwm}. }\label{fig:gautschi}
\begin{center}
        \begin{tikzpicture}[scale=1]
    \draw[thick,-stealth,black] (-3,0)--(4,0) coordinate (A) node[below] {}; 
    \draw[thick,-stealth,black] (0,-3)--(0,3) node[left] {}; 
    \draw[black,thin] (0,0) circle (2.5cm);

 \draw[rotate=60] (3,0) node[midway,right=1in] { $\omega_{M}$};
 \draw[rotate=40] (3,0) node[midway,right=1in] { $\vdots$};
 \draw[rotate=20] (3,0) node[midway,right=1in] { $\omega_{1}$};
 \draw[rotate=3] (3,0) node[midway,right=1in] { $\omega_{0}$};
 \draw[rotate=-20] (3,0) node[midway,right=1in] { $\omega_{-1}$};
 \draw[rotate=-40] (3,0) node[midway,right=1in] { $\vdots$};
 \draw[rotate=-60] (3,0) node[midway,right=1in] { $\omega_{-M}$};
  
 \draw [<->] ($(1,0)$) 
    arc (0:20:1) node[midway, right=.1cm] {\small $\theta_{1}$};
 
\foreach \theangle in {-60,-40, -20,0,20,40, 60}
{

     \draw[thick, cyan] (\theangle:2.5cm) -- (2.5,0) node[midway,right] {}; 
      \filldraw[black] (\theangle:2.5cm) circle(1pt);
    \draw[thick,dashed,red,-,rotate=\theangle] (0,0) -- (2.5,0) coordinate (B); 
   
}

    \end{tikzpicture}

\end{center}
\end{figure}
\begin{lemma}\label{lem:vinv}Let  $d=1$, and define $F(\xi) = (\widehat{SX_{0}f}(\xi), \hdots, \widehat{SX_{2M}f}(\xi))$, where $X_{k}=\{j\Delta x +k\delta\Delta \mid j\in \ZZ\}$, where $\frac{1}{N}\leq \Delta \leq 1$ and $0<\delta\leq1/{(2M+1)N}$. The system of equations given by \refeq{fk} has the form \[V\tilde{C}=F, \qquad \tilde{C}(\xi) = \({C_{-M}}(\xi),\hdots {C_{M}}(\xi)\)\]  where  $V$ is an invertible $(2M+1)\times (2M+1)$ Vandermonde matrix and ${C_{m}}(\xi)=\widehat{c_{m}}(\xi - mN+z_{m}(\xi))$. Then, \begin{align}
\frac{1}{2M+1}< \|V^{-1}\|_\infty\leq \prod\limits_{m=1}^{2M}\frac{1}{\sin\(m \pi \delta \)} . \labeleq{Vest}
\end{align}
\end{lemma}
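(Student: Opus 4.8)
The plan is to read the matrix $V$ directly off the system \refeq{fk}, recognize it as a Vandermonde matrix whose nodes lie on the unit circle, and estimate $V^{-1}$ through its Lagrange/cofactor description. Writing $L_m(\xi)=z_m(\xi)\Delta\in\ZZ$ as in \refeq{mN} and $\omega_m=e^{2\pi i L_m(\xi)\delta}$, the $(k,m)$ coefficient in \refeq{fk} is $\omega_m^{\,k}$ for $k=0,\dots,2M$, so $V$ is the (transposed) Vandermonde matrix in the nodes $\omega_{-M},\dots,\omega_M$. The first step is invertibility: since $\frac1N\le\Delta\le1$ and $0<\delta\le\frac1{(2M+1)N}$, each angle obeys $|2\pi L_m\delta|\le 2\pi (MN\Delta)\delta\le\frac{2\pi M}{2M+1}<\pi$, so the $L_m$ are distinct integers and the $\omega_m$ are distinct points on an arc shorter than the full circle, whence $V$ is nonsingular. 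Throughout I would use the chord-length identity $|\omega_m-\omega_i|=2\,|\sin(\pi (L_m-L_i)\delta)|$, which is precisely the length of the blue chords in Figure~\ref{fig:gautschi}. I read $\|V^{-1}\|_\infty$ as the largest modulus among the entries of $V^{-1}$, consistent with the vector convention $\|x\|_\infty=\max_i|x_i|$ and with both inequalities.

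For the lower bound I would argue directly from $V^{-1}V=I$. Its $(m,m)$ entry gives $\sum_{k=0}^{2M}(V^{-1})_{mk}\,\omega_m^{\,k}=1$, and since $|\omega_m|=1$ the triangle inequality yields $1\le\sum_k|(V^{-1})_{mk}|\le(2M+1)\max_{k}|(V^{-1})_{mk}|\le(2M+1)\,\|V^{-1}\|_\infty$. Hence $\|V^{-1}\|_\infty\ge\frac1{2M+1}$, and equality forces, in the maximizing row, all entries to have equal modulus and the phases $\omega_m^{\,k}(V^{-1})_{mk}$ to coincide — that is, exactly the uniform/DFT configuration — so the inequality is strict off that case.

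For the upper bound the engine is the Lagrange description $(V^{-1})_{mk}=[x^k]\ell_m(x)$ with $\ell_m(x)=\prod_{i\ne m}\frac{x-\omega_i}{\omega_m-\omega_i}$, so each entry is an elementary symmetric function of the $\omega_i$ divided by $\prod_{i\ne m}(\omega_m-\omega_i)$. After bounding the numerators using $|\omega_i|=1$ and rewriting the denominator through the chord identity, the estimate reduces to a lower bound for the product of chord lengths $\prod_{i\ne m}2\sin(\pi|L_m-L_i|\delta)$. The structural fact I would exploit is that for fixed $m$ the $2M$ integers $|L_m-L_i|$ are distinct and positive, hence when sorted they dominate $1,2,\dots,2M$ termwise; the target $\prod_{j=1}^{2M}2\sin(\pi j\delta)$ is attained at an endpoint node, where these differences are exactly $1,\dots,2M$. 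In the regime where the sampling constraints keep every argument $\pi|L_m-L_i|\delta$ below $\pi/2$, monotonicity of $\sin$ turns termwise domination into $\prod_{i\ne m}\sin(\pi|L_m-L_i|\delta)\ge\prod_{j=1}^{2M}\sin(\pi j\delta)$, which is the claimed bound once the $2^{2M}$ factors cancel.

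The hard part is twofold. First, the naive numerator estimate $|e_j|\le\binom{2M}{j}$ is too lossy for the interior nodes, where the elementary symmetric functions of points spread around the arc enjoy strong cancellation; recovering the clean constant requires capturing this cancellation, e.g.\ via the Cauchy representation $[x^k]\ell_m=\frac1{2\pi i}\oint \ell_m(x)\,x^{-k-1}\,dx$ or by evaluating directly in the equally spaced configuration of Figure~\ref{fig:gautschi}. Second, monotonicity of $\sin$ is only available below $\pi/2$, whereas for $\Delta$ near $1$ (and in the borderline uniform case $N=1,\ \delta=\frac1{2M+1}$) the arguments run up to $\frac{2M\pi}{2M+1}$, close to $\pi$; there I would instead use the unimodality of $\sin$ on $(0,\pi)$ together with a pairing of small and large differences, or the exact identity $\prod_{j=1}^{2M}\sin\!\big(\tfrac{j\pi}{2M+1}\big)=\frac{2M+1}{2^{2M}}$ and the symmetry $\sin(\pi j\delta)=\sin(\pi(2M+1-j)\delta)$, which also yields the tight constants \refeq{tightframe}. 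Controlling these two effects simultaneously is the main obstacle.
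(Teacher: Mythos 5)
The decisive problem is your reading of $\|V^{-1}\|_\infty$ as the largest modulus of an entry of $V^{-1}$. In this lemma --- and in Gautschi's estimate \cite{Gautschi1990}, inequality \refeq{gautschi}, which is the engine of the paper's proof --- $\|V^{-1}\|_\infty$ is the induced matrix norm, i.e.\ the maximum absolute row sum $\max_m\sum_k|(V^{-1})_{mk}|$. This is not a convention you are free to choose: in the admissible case $N=1$, $\delta=1/(2M+1)$ the nodes are the $(2M+1)$-st roots of unity, $V^{-1}=(2M+1)^{-1}V^{*}$, and every entry of $V^{-1}$ has modulus exactly $1/(2M+1)$; under your reading the strict inequality in \refeq{Vest} would then be false, as would the companion claim $\|V^{-1}\|_\infty=1$ stated immediately after the lemma. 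Consequently your lower-bound argument, while internally consistent, bounds the wrong quantity, and its equality analysis (the DFT configuration) lands precisely on the case that contradicts the statement under your convention. Ironically, with the correct norm your own first step $1=|\sum_k(V^{-1})_{mk}\omega_m^k|\le\sum_k|(V^{-1})_{mk}|$ already gives $\|V^{-1}\|_\infty\ge 1>\frac{1}{2M+1}$, a simpler route than the paper's, which deduces the strict bound from Gautschi's lower inequality together with the regular-polygon chord product $\prod_{m=1}^{2M}2\sin\bigl(m\pi/(2M+1)\bigr)=2M+1$ in \refeq{maxest2}.

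The upper bound, however, is simply not proved: you present a plan and explicitly leave its two ``hard parts'' open, and the structural claims the plan rests on do not hold. First, for an interior node the $2M$ integers $|L_m-L_i|$, $i\ne m$, are \emph{not} distinct --- the indices run over $-M\le i\le M$, so for (near-)equally spaced $L_i$ (say $L_i=i$, $m=0$) each value $1,\dots,M$ occurs twice --- hence the sorted absolute differences do not dominate $1,2,\dots,2M$ termwise, and that step collapses; this is exactly why the paper does not argue node by node but asserts that the Gautschi product is maximized at $m=0$ and only then compares sines. Second, the ``cancellation in the elementary symmetric functions'' you identify as the first obstacle is a phantom created by the max-entry reading: for the row-sum norm no cancellation is needed, since $\sum_j|e_j(\omega_i:i\ne m)|\le\prod_{i\ne m}(1+|\omega_i|)=2^{2M}$, and this $2^{2M}$ cancels exactly against the chord lengths $|\omega_m-\omega_i|=2|\sin(\pi(L_m-L_i)\delta)|$. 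That computation is precisely the upper half of Gautschi's inequality \refeq{gautschi}, which the paper invokes as a black box and then reduces, via $L_m\ge m$ and the sine estimates \refeq{maxest1}, to the stated bound. So as written the proposal establishes neither inequality of \refeq{Vest}: the lower bound is proved for the wrong norm (and would be false for it), and the upper bound remains an unexecuted, partly incorrect outline.
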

\noindent Furthermore, if $N =1$ and $\delta=\frac{1}{(2M+1)}$, then $\|V_n^{-1}\|_\infty=1$.
\begin{proof}

The the system of equations \refeq{fk} in the one-dimensional case  is understood using properties of $n\times n$ Vandermonde matrices of the form
\begin{align*}
V_{n} =\begin{pmatrix}
1 &1 &\hdots& 1\\
\omega_1 &\omega_2 &\hdots& \omega_n\\
\omega_1^2 &\omega_2^2 &\hdots& \omega_n^2\\
\vdots &\vdots &\hdots& \vdots\\
\omega_1^{n-1} &\omega_2^{n-1} &\hdots& \omega_n^{n-1}\\
\end{pmatrix},
\end{align*} where $\omega_1, \hdots, \omega_n$ are distinct nonzero complex numbers and $n>1$. 
A result of Gautschi \cite{Gautschi1990} states that 
\begin{align}
\max_{m}\prod\limits_{m'\neq m}\frac{1}{|\omega_{m'}-\omega_{m}|}<\|V_n^{-1}\|_\infty\leq \max_{m} \prod\limits_{m'\neq m}\frac{2}{|\omega_{m'}-\omega_{m}|}. \labeleq{gautschi}
\end{align}
We will fix $\xi\in \Omega_{\Delta}$ and suppress it from the notation. For $m = -M, \hdots, M$, and define the unit complex numbers $\omega_m= e^{i\theta_{m}}$ corresponding to the angles $\theta_{m}= 2\pi L_{m} \delta$.  Since $0=\theta_{0}<\theta_{1}< \hdots< \theta_{M}\leq 2\pi {M}/{(2M+1)}<\pi$ are distinct, the $(2M+1)\times (2M+1)$ Vandermonde matrix $\{(V)_{jm} =\omega_{m}^{j}\}_{0\leq j\leq 2M, -M\leq m\leq M}$ is invertible. 

For $N=1$ and $\delta=\frac{1}{2M+1}$,  $\theta_{m}=\frac{m}{2M+1}$ and $\omega_{m}=e^{ i \theta_{m}}$ are roots of unity and it is known that  $\|V^{-1}\|_\infty=1$. Otherwise, for $-M\leq m\leq M-1$, it follows that $N=\frac{L_{m+1}-L_{m}}{\Delta} +(\alpha_{m+1}-\alpha_{m})= \frac{L_{1}}{\Delta}+\alpha_{1}$. Therefore $L_{m+1}-L_{m}\geq L_{1}$  and $|\omega_{m+1}-\omega_{m}|\geq |\omega_{1}-\omega_{0}|$. 
 Furthermore, $\theta_{M}+\theta_{1}/2=\pi (2M+1)N\delta \leq\pi$ and so $|\omega_{M} - \omega_{-M}|=|1 - e^{ 2 i \theta_{M}}|  \geq |\omega_{1}-\omega_{0}|$. 
The product in \refeq{gautschi} is maximized when $m=0$, (see Figure \ref{fig:gautschi} for an illustration) and
\begin{align}
| \omega_{m}-\omega_{0}| = 2 |\sin\(\theta_{m}/{2}\)|= 2 |\sin\({\pi L_{m}\delta}\)|.\labeleq{prodwm}
\end{align}

The result \refeq{Vest} follows from the estimate (note $m\leq \lfloor{m}N \Delta\rfloor\leq mN$),
\begin{align}
\prod\limits_{m=1}^{2M}\frac{2}{|\omega_{m}-\omega_{0}|}& =\prod\limits_{m=1}^{2M}\frac{1}{\sin\(\lfloor{m}N \Delta\rfloor \pi  \delta \)} \leq \prod\limits_{m=1}^{2M}\frac{1}{\sin\({m} \pi  \delta \)}  \labeleq{maxest1}.
 \end{align}

 The product of the terms $|\omega_{m}-\omega_{0}|$ is minimized if $\delta=1/{(2M+1)N}$ and the lower bound is a well-known result for the product of the lengths of the sides of a regular polygon inscribed in the unit circle,
\begin{align}
\prod\limits_{m=1}^{2M}\frac{1}{{|\omega_{m}-\omega_{0}|}}\geq \prod\limits_{m=1}^{2M}\frac{1}{{2|\sin( mN \pi \delta)|}}\geq \prod\limits_{m=1}^{2M}\frac{1}{2\sin\(2\pi {\frac{m }{2M+1}}  \)}  \labeleq{maxest2}
=\frac{1}{2M+1}.
\end{align}

The upper bound in \refeq{gautschi} is attained if the complex numbers $\omega_{m}=|\omega_{m}|e^{ i\theta}$ are points on the same ray through the origin. In the present setting, this happens when $M=0$. 

\end{proof}

In higher dimensions, the invertibility of Vandermonde matrices is not guaranteed. The problem of efficiently characterizing the invertibility of these matrices is a challenging open problem. In the next section we reduce the full $d-$dimensional linear system to a sequence of invertible Vandermonde systems.

\section{Reconstruction algorithm}\label{sec:iterative-recon}

There are other general results for reconstructing bandlimited functions from sampling sets \cite{Faridani1994}. The purpose of the algorithm presented here is to assist with proving the stability estimates in Theorem \ref{thm:main-samp-nd}. For $d>1$, an iterative process can be used to determine unknown coefficients in \refeq{twoscale}. 

\begin{theorem}
The function $f\in \mc{B}(\Omega_{M,  N})$ is uniquely reconstructed from samples $f(y), y\in X$, where $X$ satisfies the assumption of Theorem \ref{thm:main-samp-nd}, using the following procedure for each $\xi\in \Omega_{\Delta}$,
\begin{description}
\item[\bf Step 1] For each $j \in \mc{K}^{d}$, define $F_{j}(\xi) = \widehat{S_{X_{j}}f}(\xi)$. 
\item[\bf Step 2] If $d=1$, set $\tilde{C}(\xi):=\begin{pmatrix}  F_{0}(\xi) , \hdots, F_{2M}(\xi) \end{pmatrix}$, $F^{m_{d}}(\xi):=(V^{-1}\tilde{C}(\xi))_{m_{d}}$,  $m_{d}\in \mc{M}^{1}$. Then, skip to Step 5.

If $d\geq 2$, for each $j' \in \mc{K}^{d-1}$,  \begin{align*}\tilde{C}_{j'}(\xi) &:=\(  F_{(j', 0)}(\xi) , \hdots, F_{(j', 2M)}(\xi) \)\\
F_{j'}^{m_{d}}(\xi)&:=(V^{-1}\tilde{C}_{j'}(\xi))_{m_{d}}, \qquad m_{d}\in \mc{M}^{1}.
\end{align*}

\item[\bf Step 3] For $l=d-1, \hdots, 2$, and for all  $j'\in \mc{K}^{l-1}, m'\in \mc{M}^{d-l}$, 
\begin{align*}\tilde{C}_{j'}^{m'}(\xi) &:=\( F^{m'}_{(j',0)}(\xi) , \hdots ,  F^{m'}_{(j',2M)}(\xi) \)\\
F_{j'}^{(m_{l}, m')}(\xi) &:=(V^{-1}\tilde{C}_{j'}^{m'}(\xi))_{m_{l}},  \qquad m_{l}\in \mc{M}^{1}.
\end{align*}

\item[\bf Step 4] For each $ m'\in \mc{M}^{d-1}$, define 
\begin{align*}\tilde{C}^{m'}(\xi) &:=\(  F^{m'}_{0}(\xi), \hdots , F^{m'}_{2M}(\xi) \)\\
 F^{(m_{1}, m')}(\xi)&:=(V^{-1}\tilde{C}^{m'}(\xi))_{m_{1}},  \qquad m_{1}\in \mc{M}^{1}.
\end{align*}

\item[\bf Step 5] The reconstruction of $\hat{f}$ is given by the formula
\begin{align*}
 \hat{f}(\xi+z_{m}(\xi))=\sum\limits_{m\in \mc{M}^{d}}F^{m}(\xi).
\end{align*}

\end{description}
\label{thm:reconstruction}
\end{theorem}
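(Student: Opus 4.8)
The plan is to recognize the system \refeq{fk} as a tensor product of one-dimensional Vandermonde systems and to show that the procedure of Steps 1--5 amounts to inverting this tensor product one coordinate at a time. First I would fix $\xi\in\Omega_\Delta$ and read \refeq{fk} as a linear system in the unknowns $C_m(\xi)=\widehat{c_m}(\xi-mN+z_m(\xi))$, $m\in\mc{M}^d$, with right-hand sides $F_k(\xi)=\widehat{S_{X_k}f}(\xi)$, $k\in\mc{K}^d$. By Lemma \ref{lemma:fl2} together with \refeq{mN} one has $C_m(\xi)=\hat f(\xi+z_m(\xi))$ on $\Omega_\Delta$, so recovering all the $C_m$ recovers $\hat f$ on $\Omega_{M,N}$ and hence $f$. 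The crucial structural observation is that, because $\Omega=(-1/2,1/2)^d$, $\Omega_\Delta$, and the shift lattice $N\ZZ^d$ are all Cartesian products, the defining condition \refeq{mN} separates across coordinates: writing $m=(m_1,\dots,m_d)$ and $\xi=(\xi_1,\dots,\xi_d)$, the $l$-th component of $z_m(\xi)$ depends only on $(m_l,\xi_l)$, and likewise the $l$-th component of $L_m(\xi)$ equals the one-dimensional integer $L_{m_l}(\xi_l)$. Consequently the exponential factor in \refeq{fk} factorizes,
\[
e^{2\pi i \ip{z_m(\xi)\Delta}{k\delta}} = \prod_{l=1}^d e^{2\pi i \delta L_{m_l}(\xi_l)\, k_l} = \prod_{l=1}^d \big(\omega^{(l)}_{m_l}\big)^{k_l},
\]
where $\omega^{(l)}_{m_l}=e^{2\pi i \delta L_{m_l}(\xi_l)}$ are exactly the one-dimensional Vandermonde nodes of Lemma \ref{lem:vinv} built from the $l$-th coordinate.

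Next I would conclude that, for each fixed $\xi$, the system matrix is the $d$-fold Kronecker product $V^{(1)}\otimes\cdots\otimes V^{(d)}$, where each $V^{(l)}$ is the $(2M+1)\times(2M+1)$ Vandermonde matrix of Lemma \ref{lem:vinv} associated with the node set $\{\omega^{(l)}_{m_l}\}_{m_l\in\mc{M}^1}$. Under the hypotheses $\frac1N\le\Delta\le1$ and $0<\delta\le\frac{1}{(2M+1)N}$, Lemma \ref{lem:vinv} guarantees that every $V^{(l)}$ is invertible, so the Kronecker product is invertible with inverse $(V^{(1)})^{-1}\otimes\cdots\otimes(V^{(d)})^{-1}$. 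This yields uniqueness directly: the $F_k(\xi)$ are determined by the samples $f(y)$, $y\in X_k$, through \refeq{SX}, and invertibility determines the $C_m(\xi)$, hence $\hat f$ on $\Omega_{M,N}$, uniquely.

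Then I would verify that Steps 1--5 realize the mode-by-mode application of the single-coordinate inverses. Step 1 assembles the data $F_j(\xi)$. Applying $(V^{(d)})^{-1}$ along the last coordinate (Step 2) converts the sample index $j_d$ into the frequency index $m_d$; the descending loop in Step 3 applies $(V^{(l)})^{-1}$ for $l=d-1,\dots,2$, converting each remaining sample index into a frequency index; and Step 4 applies $(V^{(1)})^{-1}$ to the first coordinate. Because the Kronecker inverse factors as a composition of single-mode inverses that commute across distinct modes, after Step 4 one has $F^m(\xi)=\big((V^{(1)})^{-1}\otimes\cdots\otimes(V^{(d)})^{-1}F\big)_m=C_m(\xi)=\hat f(\xi+z_m(\xi))$, which is the content of Step 5; reassembling via \refeq{twoscale} and Lemma \ref{lemma:fl2} then recovers $f$.

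The main obstacle is the first step: proving rigorously that \refeq{mN} separates coordinate-wise so that $L_m(\xi)$ factorizes and the exponential splits into a genuine Kronecker product of one-dimensional Vandermonde matrices of the precise form treated in Lemma \ref{lem:vinv}. Once this product structure is in hand, invertibility follows from Lemma \ref{lem:vinv} coordinate by coordinate, and correctness of the iteration is the elementary identity $(V^{(1)}\otimes\cdots\otimes V^{(d)})^{-1}=(V^{(1)})^{-1}\otimes\cdots\otimes(V^{(d)})^{-1}$ applied one mode at a time; the remaining effort is bookkeeping of the tensor indices $j'$ and $m'$ to match the notation of Steps 2--4. A minor point to address is that the nodes $\omega^{(l)}_{m_l}$ depend on the coordinate $\xi_l$, so the matrix written $V$ in the algorithm is really the coordinate-dependent $V^{(l)}$; this does not affect the argument, since each such matrix is invertible by Lemma \ref{lem:vinv}.
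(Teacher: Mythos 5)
Your proposal is correct and takes essentially the same route as the paper: the paper's iterative argument (fixing $j'$, grouping terms by $m_{d}$ into partial sums $F_{j'}^{m_{d}}$, and inverting one-dimensional Vandermonde systems for $l=d,\hdots,1$) is precisely the mode-by-mode application of $(V^{(1)})^{-1}\otimes\cdots\otimes(V^{(d)})^{-1}$ that you describe, resting on the same two ingredients — the coordinate-wise separability of $z_{m}(\xi)$, which the paper uses implicitly when writing \refeq{step2eq} with the factored exponentials $e^{2\pi i \ip{L_{n'}}{j'\delta}}e^{2\pi i L_{m_{d}}k_{d}\delta}$, and the invertibility of each one-dimensional factor from Lemma \ref{lem:vinv}. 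Your explicit Kronecker-product framing, and your observation that the matrix denoted $V$ is really coordinate-dependent through $\xi_{l}$, are accurate clarifications of points the paper leaves tacit, but they do not constitute a different argument.
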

\begin{proof}
If $d=1$, then Lemma \ref{lem:vinv} guarantees the unique recovery of the coefficient functions $
F^{m}(\xi):=\widehat{c_{m}}(\xi - mN+z_{m}(\xi)),  m\in \mc{M}^{1}$. Then, as in \cite{Engquist2014}, $\hat{f}(\xi+z_{m}(\xi))$ can be uniquely reconstructed using the formula \begin{align*}
F^{m}(\xi) =\widehat{c_m}\(\xi -mN+ z_{m}(\xi)\) = \hat{f}(\xi+z_{m}(\xi)).
\end{align*}
Let $L_{m} = L_{m}(\xi)\in \ZZ^{d}$.

Suppose $d>1$. Then, for each fixed $j' \in \mc{K}^{d-1}$ and $k_{d}\in \mc{K}^{1}$, the system of equations  \refeq{fk} corresponding to the sampling sets $\Lambda_{(j',k_{d})}$ and coefficient indices $m=(n', m_{d})\in \mc{M}^{d}$ can be expressed
\begin{align}
F_{(j',k_d)}(\xi)=\sum\limits_{m_{d}\in \mc{M}^{1}}\(\sum\limits_{n'\in \mc{M}^{d-1}}\hat{c}_{(n',m_{d})}\(\xi -mN+ z_{m}(\xi)\)e^{2\pi i \ip{L_{n'}}{j'\delta}  }\)e^{2\pi i {L_{m_{d}}}{k_{d}}\delta  }.\labeleq{step2eq}
\end{align}
The system \refeq{step2eq} is an invertible Vandermonde system with the unknowns \begin{align*}
F_{j'}^{m_{d}}(\xi):=\sum\limits_{n'\in \mc{M}^{d-1}}\hat{c}_{(n',m_{d})}\(\xi -mN+ z_{m}(\xi)\)e^{2\pi i \ip{L_{n'}}{j'\delta}  }, \qquad {m_{d}}\in \mc{M}^{1}.
\end{align*}
This process is then repeated for $l=d-1, \hdots, 2$. For each, $j'\in \mc{K}^{l-1}, k_{l}\in \mc{K}^{1}, m'\in \mc{M}^{d-l}$, the system of equations corresponding to $F_{(j',k_{l})}^{m'}$and coefficient indices $m=(n',m_{l},m')\in \mc{M}^{d}$ is 
\begin{align*}
F_{(j',k_{l})}^{m'}(\xi)=\sum\limits_{m_{l}\in \mc{M}^{1}}\(\sum\limits_{n'\in \mc{M}^{l-1}}\hat{c}_{(n',m_{l},m')}\(\xi -mN+ z_{m}(\xi)\)e^{2\pi i \ip{L_{n'}}{j'\delta}  }\)e^{2\pi i {L_{m_{l}}}{k_{l}\delta} },
\end{align*}
which is an invertible Vandermonde system with the unknowns \begin{align*}
F_{j'}^{(m_{l},m')}(\xi):=\sum\limits_{n'\in \mc{M}^{l-1}}\hat{c}_{(n',m_{l},m')}\(\xi -mN+ z_{m}(\xi)\)e^{2\pi i \ip{L_{  n'}}{j'\delta}  }, \qquad m_{l}\in \mc{M}^{1}.
\end{align*}
In the final step, for each $m'\in \mc{M}^{d-1}$ and $k_{1}\in \mc{K}^{1}$, the system of equations corresponding to $F_{k_{1}}^{m'}$ and coefficient indices $(m_{1},m')\in \mc{M}^{d}$ is 
\begin{align*}
F_{k_{1}}^{m'}(\xi)=\sum\limits_{m_{1}\in \mc{M}^{1}}\hat{c}_{(m_{1},m')}\(\xi -mN+ z_{m}(\xi)\)e^{2\pi i {L_{ m_{1}}}{k_{1}\delta} },
\end{align*}
which is an invertible Vandermonde system with the unknowns \begin{align*}
F^{(m_{1},m')}(\xi):=\hat{c}_{(m_{1},m')}\(\xi -mN+ z_{m}(\xi)\), \qquad m_{1}\in \mc{M}^{1} = \hat{f}(\xi+z_{m}(\xi)).
\end{align*}

Then, for all $m\in\mc{M}^{d}$, $\hat{f}({\xi}+z_{m}(\xi))$ can be reconstructed using the formula \begin{align*}
F^{m}(\xi) =\hat{c}_m\(\xi-{mN}+z_{m}(\xi)\) = \hat{f}(\xi+z_{m}(\xi)). \end{align*}
The reconstruction formula is unique due to the invertibility of the Vandermonde system in each step. Since, $\Omega_{M,N}\subseteq \cup_{m\in\mc{M}^{d}}\{\xi+z_{m}(\xi)\mid \xi\in\Omega_{\Delta}\}$, $\hat{f}$ is completely determined on $\Omega_{M,N}$. By taking the inverse Fourier Transform, this is equivalent to the unique reconstruction of $f$.

\end{proof}

\section{Proof of Theorem \ref{thm:main-samp-nd}}\label{sec:stability-proof}

The proof of the main result involves an analysis of each step in the algorithm given in Theorem \ref{thm:reconstruction}.
\begin{proof}
By Lemma \ref{lemma:SX}, the following  holds for each  $k \in \mc{K}^{d}$: \begin{align} \|F_{k}\|^{2}_{L^{2}(\RR^{d})} &= \Delta^{d}\sum\limits_{y\in X_{k} }|f(y) |^{2}\labeleq{fkl2}.
\end{align}

\noindent The norm of the iterates can then be bounded in the second step, for each $j'\in \mc{K}^{d-1}$,
\begin{align*} 
F_{j'}(\xi) &=(F^{-M}_{j'}(\xi), \hdots F^{M}_{j'}(\xi))\\
\|\tilde{C}_{j'}(\xi)\|_{\ell^{2}}^{2}& = \|VF_{j'}(\xi)\|_{\ell^{2}}^{2}\leq \|V\|_{2}^{2}\|F_{j'}(\xi)\|_{\ell^{2}}^{2} \\
 \|F_{j'}(\xi)\|_{\ell^{2}}^{2}&\leq  \|V^{-1}\tilde{C}_{j'}(\xi)\|_{\ell^{2}}^{2}\leq  \|V^{-1}\|_{2}^{2}\|\tilde{C}_{j'}(\xi)\|_{\ell^{2}}^{2}.
 \end{align*}
 Now, integrating over $\RR^{d}$,
 \begin{align*}
\|V^{-1}\|_{2}^{-2}\sum_{m_{d}\in\mc{M}^{d}}\|F_{j'}^{m_{d}}\|_{L^{2}(\RR^{d})}^{2}&\leq \sum_{j_{d} \in \mc{K}^{1}}\|F_{(j',j_{d})}\|_{L^{2}(\RR^{d})}^{2}\leq \|V\|_{2}^{2} \sum_{m_{d}\in\mc{M}^{d}}\|F_{j'}^{m_{d}}\|_{L^{2}(\RR^{d})}^{2}.
\end{align*}


\noindent In the third step, for $l=d-1, \hdots, 2$, for all $j'\in \mc{K}^{l-1}, m'\in \mc{M}^{d-l}$,
\begin{align*} 
F^{m'}_{j'}(\xi) &=(F^{(m',-M)}_{j'}(\xi), \hdots F^{(m',M)}_{j'}(\xi))\\
\|\tilde{C}_{j'}^{m'}(\xi)\|_{\ell^{2}}^{2}& = \|VF^{m'}_{j'}(\xi)\|_{\ell^{2}}^{2}\leq \|V\|_{2}^{2}\|F^{m'}_{j'}(\xi)\|_{\ell^{2}}^{2} \\
 \|F^{m'}_{j'}(\xi)\|_{\ell^{2}}^{2}&\leq  \|V^{-1}\tilde{C}^{m'}_{j'}(\xi)\|_{\ell^{2}}^{2}\leq  \|V^{-1}\|_{2}^{2}\|\tilde{C}^{m'}_{j'}(\xi)\|_{\ell^{2}}^{2} 
 \end{align*}
 Now, integrating over $\RR^{d}$,
 \begin{align*}
 \|V^{-1}\|_{2}^{-2}\sum_{m_{l}\in\mc{M}^{1}}\|F_{j'}^{(m_{l},m')}\|_{L^{2}(\RR^{d})}^{2}&\leq \sum_{j_{l} \in \mc{K}^{1}}\|F^{m'}_{(j',j_{l})}\|_{L^{2}(\RR^{d})}^{2}\leq \|V\|_{2}^{2} \sum_{m_{l}\in\mc{M}^{1}}\|F_{j'}^{(m_{l}, m')}\|_{L^{2}(\RR^{d})}^{2}.
\end{align*}
\noindent In Step 4, corresponding to each $ m'\in \mc{M}^{d-1}$, the resulting estimates are 
\begin{align} 
 \|V^{-1}\|_{2}^{-2} \sum_{m_{1}\in \mc{M}^{1}}\|c_{(m_{1},m')}\|_{L^{2}(\RR^{d})}^{2}&\leq \sum_{j_{1} \in \mc{K}^{1}}\|F^{m'}_{j_{1}}\|_{L^{2}(\RR^{d})}^{2}\leq \|V\|_{2}^{2} \sum_{m_{1}\in \mc{M}^{1}}\|c_{(m_{1},m')}\|_{L^{2}(\RR^{d})}^{2} \labeleq{cmest}.
\end{align} 
\noindent Combining \refeq{fkl2} and \refeq{cmest}, and the reconstruction step,
\begin{align*} 
 \|V^{-1}\|_{2}^{-2d} \|f\|_{L^{2}(\RR^{d})}^{2}&\leq \Delta^{d} \sum_{y \in X}|f(y)|^{2}\leq \|V\|_{2}^{2d} \|f\|_{L^{2}(\RR^{d})}^{2} \labeleq{cmest}.
\end{align*}

The upper bound in \refeq{thmab} is then given by $B=\|V\|_{2}^{2d}  $ and $(2M+1)\leq \|V\|^{2}_{2}\leq(2M+1)^{2}$ \cite{Pan2005}. The lower bound in \refeq{thmab} is then given by $A=\|V^{-1}\|_{2}^{-2d}$. Note that since $1=\|VV^{-1}\|_{2}\leq \|V\|_{2}\|V^{-1}\|_{2}$, it must follow that $A\leq B$. Using the identity $\|V^{-1}\|_{2}^{2} = (2M+1)\|V^{-1}\|_{\infty}^{2}$ and  Lemma \ref{lem:vinv} results in the estimate on the lower bound of $A$ in \refeq{thmab}. In the case $N=1$ and $\delta=1/(2M+1)$, Lemma \ref{lem:vinv} gives  $\|V^{-1}\|_{\infty}=1$, and then $A=B=(2M+1)^{d}$.

\end{proof}
\section*{Acknowledgements} This work has benefited from valuable discussions and insights from Bjorn Engquist, Kasso Okoudjou, David Walnut, Haomin Zhou, and an anonymous reviewer. This research was supported in part by NSF grants DMS-1317015, DMS-1720306,  and Institut Mittag-Leffler.


\end{document}